\newtheorem{theorem}{Theorem}[section]
\newcommand{\B}{\ensuremath{\mathcal{B}}} 
\newcommand{\zed}{\ensuremath{\mathbb{Z}}} 
\newcommand{\fb}[1]{\framebox(26,12){#1}} 
\newcommand{\fbb}[1]{\framebox(36,12){#1}} 
\definecolor{red}{RGB}{220,20,60}
\newcommand{\RED}[1]{\textcolor{red}{#1}}
\newcommand{\third}{\ensuremath{\mathsf{third}}}
\title{Block-avoiding sequencings of points in Steiner triple systems}
\author{Donald L.\ Kreher}
\affil{Department of Mathematical Sciences,
Michigan Technological University,
Houghton, Michigan, 49931,
U.S.A.}
\author{Douglas R.\ Stinson%
\thanks{D.R.\ Stinson's research is supported by  NSERC discovery grant RGPIN-03882.}}
\affil{David R.\ Cheriton School of Computer Science, University of Waterloo,
Waterloo, Ontario, N2L 3G1, Canada}
\date{\today}
\begin{document}
\maketitle

\begin{abstract}
Given an STS$(v)$, we ask if there is a permutation of the points of the design such that no $\ell$ consecutive points in this permutation contain a block of the design. Results are obtained in the cases $\ell = 3,4$.
\end{abstract}

\section{Introduction}

A \emph{Steiner triple system of order $v$} is a pair $(X, \B)$, where 
$X$ is a set of $v$ \emph{points} and $\B$ is a set of 3-subsets of $X$ (called 
\emph{blocks}), such that every pair of points occur in exactly one block.
We will abbreviate the phrase ``Steiner triple system of order $v$'' to 
STS$(v)$.

It is well-known that an STS$(v)$ contains exactly $v(v-1)/6$ blocks, and an 
STS$(v)$ exists if and only  if $v \equiv 1,3 \bmod 6$. The definitive reference for Steiner triple systems is the book \cite{CR} by Colbourn and Rosa.

Suppose $(X, \B)$ is an STS$(v)$. We ask if there is a permutation 
(or \emph{sequencing}) of the points in $X$ so that no 
three consecutive points in the sequencing 
comprise a block in $\B$. That is, can we fid a sequencing 
$\pi = [x_1\; x_2 \;  \cdots \;  x_v]$ of $X$ such that $\{ x_i, x_{i+1}, x_{i+2}\} \not\in \B$ for all $i$, $1 \leq i \leq v-2$? Such a sequencing will be termed a \emph{$3$-good} sequencing for the given STS$(v)$.

More generally, we could ask if there is a sequencing of the points such that no $\ell$ consecutive points in the sequencing contain a block in $\B$. Such a sequencing will be termed $\ell$-good for the given STS$(v)$. 

As an example, consider the STS$(7)$ $(X,\B)$, where $X = \zed_7$ and $\B = 
\{ 013, 124, 235, 346, 450, 451, 562\}$. The sequencing 
$[0\; 1 \;  2 \; 3\; 4 \;  5 \; 6]$ is easily seen to be $3$-good. However, it is not 
$4$-good, as the block $013$ is contained in the first four points of the sequencing.
(Note that, here and elsewhere, we might write blocks $\{x,y,z\}$ as $xyz$ if the context is clear.)

A \emph{partial Steiner triple system of order $v$} is a pair $(X, \B)$, where 
$X$ is a set of $v$ \emph{points} and $\B$ is a set of 3-subsets of $X$ (called 
\emph{blocks}), such that every pair of points occur in at most one block.
We will abbreviate the phrase ``partial Steiner triple system of order $v$'' to 
partial STS$(v)$ or PSTS$(v)$.  There are no congruential restrictions on the values $v$ for which PSTS$(v)$ exist. We will also consider $\ell$-good sequencings of 
PSTS$(v)$.

The main results we prove in this paper are that every STS$(v)$ with $v>3$ has a $3$-good sequencing, and every STS$(v)$ with $v > 71$ has a 4-good sequencing. Similar results are obtained for PSTS$(v)$ as well.

We will use the following notation. Suppose $(X,\B)$ is an STS$(v)$.
Then, for any pair of points $x,y$, 
let $\third(x,y) = z$ if and only if $\{x,y,z\} \in \B$.
The function $\third$ is well-defined because every pair of points occurs in a 
unique block in $\B$.

\subsection{Background and motivation}

Brian Alspach gave a talk entitled ``Strongly Sequenceable Groups'' 
at the 2018 Kliakhandler Conference, which was held at Michigan Technological University. 
In this talk, among other things, the notion of sequencing diffuse posets was introduced
and the following research problem was posed:

\begin{quote}
``Given a triple system of order $n$ with $\lambda = 1$, define a poset $P$ by
letting its elements be the triples and any union of disjoint triples.
This poset is not diffuse in general, but it is certainly possible that
$P$ is sequenceable.''
\end{quote}

A \emph{sequenceable} STS$(v)$ (or PSTS$(v)$
is an STS$(v)$ in which the points can be ordered 
(i.e., sequenced)  so that no  $t$ consecutive points can be partitioned into $t/3$ blocks, for any $t \equiv 0 \bmod 3$, $t < v$. The problem is studied in Alspach, Kreher and Pastine \cite{AKP}.

One possible relaxation of the definition of sequenceable STS$(v)$ would be to require 
a sequencing of the points so that no $t$ consecutive points can be partitioned into $t/3$ blocks, for any $t \equiv 0 \bmod 3$, $t \leq w$, where $w < v$ is some specified integer. Such an  STS$(v)$ could be termed \emph{$w$-semi-sequenceable}.

A $3$-semi-sequenceable STS$(v)$ has a sequencing of the points so that no three consecutive points form a block. This is identical to a ``$3$-good sequencing.''
As noted above, we then generalize this notion to $\ell$-good sequencings and 
we consider the case $\ell = 4$ in detail.  

Although we do not explicitly study $w$-semi-sequenceable
STS in this paper, we note the following connection between $w$-semi-sequenceable STS$(v)$ and
STS$(v)$ having $\ell$-good sequencings.

\begin{theorem} An STS$(v)$ that has a $(2u+1)$-good sequencing is $3u$-semi-sequenceable.
\end{theorem}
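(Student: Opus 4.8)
The plan is to show that the very sequencing witnessing $(2u+1)$-goodness also certifies $3u$-semi-sequenceability. First I would restate both notions positionally. Writing the sequencing as $[x_1\; x_2\; \cdots\; x_v]$, a block $\{x_a,x_b,x_c\}$ with $a<b<c$ is contained in some set of $2u+1$ consecutive points exactly when $c-a\le 2u$; hence the sequencing is $(2u+1)$-good if and only if every block has its extreme positions differing by at least $2u+1$. On the other side, being $3u$-semi-sequenceable means that for every $t\equiv 0\bmod 3$ with $t\le 3u$, no $t$ consecutive points split into $t/3$ blocks; so it suffices to rule out, for each $s$ with $1\le s\le u$, the existence of $3s$ consecutive points that partition into $s$ blocks of the design.

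Next I would argue by contradiction: suppose the $3s$ consecutive points at positions $i,i+1,\ldots,i+3s-1$ (with $s\le u$) partition into blocks $B_1,\ldots,B_s$. Since these blocks are pairwise disjoint, their minimum positions are $s$ distinct integers lying in the interval $[i,\,i+3s-1]$. The key step is the elementary observation that the largest of $s$ distinct integers, each at least $i$, is at least $i+s-1$; so if $B$ denotes the block whose minimum position $m$ attains this maximum, then $m\ge i+s-1$. Every point of $B$ then has position between $m$ and $i+3s-1$, so the extreme positions of $B$ differ by at most $(i+3s-1)-m\le (i+3s-1)-(i+s-1)=2s\le 2u$, i.e.\ $B$ lies inside $2s+1\le 2u+1$ consecutive points of the sequencing. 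By the reformulation above this contradicts $(2u+1)$-goodness, which finishes the proof.

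The only point needing a word of care is the degenerate range of $v$: the term ``$3u$-semi-sequenceable'' is only meaningful when $3u<v$ (following the paper's convention that the parameter $w$ satisfies $w<v$), and then automatically $v>3u\ge 2u+1$, so windows of $2u+1$ consecutive points genuinely exist and $(2u+1)$-goodness is not vacuous; in particular the small sub-window containing $B$ can be enlarged to a window of exactly $2u+1$ consecutive points within $[1,v]$. I do not anticipate a real obstacle; the heart of the argument is the one-line pigeonhole bound $m\ge i+s-1$, which forces the ``rightmost-starting'' block of any such partition to be squeezed into at most $2u+1$ consecutive points.
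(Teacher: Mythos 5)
Your proof is correct and is essentially the same as the paper's: both arguments apply a pigeonhole to the minimum positions of the $t/3$ pairwise disjoint blocks to find one whose minimum position is at least $t/3$ places into the window, forcing that block into $2t/3+1\le 2u+1$ consecutive points and contradicting $(2u+1)$-goodness. Your added remark about needing $v\ge 2u+1$ so that the small window can be enlarged to one of exactly $2u+1$ consecutive points is a minor point the paper leaves implicit.
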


\begin{proof} Let $\pi$ be a sequencing of the points of an STS$(v)$.
Suppose $t \equiv 0 \bmod 3$ and suppose there are $t$ consecutive points
in $\pi$ that can be partitioned into $t/3$ blocks of the STS$(v)$. Let these $t$ points be denoted 
(in order) $x_1, \dots , x_t$. Then 
\[ \{x_1, \dots , x_t\} = \bigcup _{j=1}^{t/3} B_j,\]
where $B_1, \dots , B_{t/3}$ are blocks in the STS$(v)$. For $1 \leq j \leq t/3$, let
\[m_{lo}(j) = \min \{ i : x_i \in B_j\}\] and let \[m_{hi}(j) = \max \{ i : x_i \in B_j\}.\]
Clearly there is a block $B_j$ such that $m_{lo}(j) \geq t/3$. It also holds that 
$m_{hi}(j) \leq t$. Therefore the block $B_j \subseteq \{x_{t/3}, \dots , x_t\}$, which means that
the sequencing $\pi$ is not $(2t/3 +1)$-good.
\end{proof}


\section{Existence of $3$-good sequencings} 

In this section, we show that there is a $3$-good sequencing for any STS$(v)$ with $v>3$, as well as for any PSTS$(v)$ with $v  > 3$. We prove these facts in two ways: first, by a counting argument, and second, by using a greedy algorithm.

\subsection{A counting argument}

Let $(X, \B)$ be an STS$(v)$ on points $X = \{1, \dots , v\}$.
For a sequencing $\pi = [x_1\; x_2 \;  \cdots \;  x_v]$ of $X$, 
and for any $i$, $1 \leq i \leq v-2$,
define $\pi$ to be \emph{$i$-forbidden} if $\{x_i, x_{i+1}, x_{i+2}\} \in \B$. 
Let $\mathsf{forbidden}(i)$ denote the set of $i$-forbidden sequencings.
Also, define a sequencing to be \emph{forbidden} if it is $i$-forbidden for at least one value
of $i$ and let $\mathsf{forbidden}$ denote the set of forbidden sequencings.
Clearly, a sequencing is $3$-good if and only if it is not forbidden.

\begin{theorem}
\label{T1}
Suppose $v > 3$ and $(X, \B)$ is an STS$(v)$ on points $X = \{1, \dots , v\}$.
Then there is a sequencing $\pi = [x_1\; x_2 \;  \cdots \;  x_v]$ of $X$ that is $3$-good
for $(X, \B)$.
\end{theorem}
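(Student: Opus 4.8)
The plan is to argue by counting: there are exactly $v!$ sequencings of $X$, and it suffices to show that strictly fewer than $v!$ of them are forbidden. First I would evaluate $|\mathsf{forbidden}(i)|$ for a fixed $i$ with $1 \le i \le v-2$. A sequencing lies in $\mathsf{forbidden}(i)$ exactly when the three symbols in positions $i,i+1,i+2$ form a block, so we may pick one of the $v(v-1)/6$ blocks, arrange its three points in those positions in one of $3!$ ways, and arrange the remaining $v-3$ points elsewhere in $(v-3)!$ ways. Hence $|\mathsf{forbidden}(i)| = \tfrac{v(v-1)}{6}\cdot 6\cdot(v-3)! = v(v-1)(v-3)!$, independent of $i$, and summing over the $v-2$ positions gives $\sum_{i=1}^{v-2}|\mathsf{forbidden}(i)| = (v-2)\,v(v-1)(v-3)! = v!$.

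This computation is the crux, and also the main obstacle: the plain union bound $|\mathsf{forbidden}| \le \sum_i |\mathsf{forbidden}(i)| = v!$ is exactly tight, hence useless on its own. The fix is to show that the sets $\mathsf{forbidden}(i)$ are not pairwise disjoint once $v>3$. Note $v>3$ together with $v \equiv 1,3 \bmod 6$ forces $v \ge 7$, so every point lies in $(v-1)/2 \ge 3$ blocks; choose a point $p$ and two distinct blocks $B=\{a,b,p\}$ and $B'=\{p,c,d\}$ through it, so that $a,b,c,d,p$ are distinct. The sequencing beginning $a,\,b,\,p,\,c,\,d$ and listing the remaining $v-5$ points in any order lies in $\mathsf{forbidden}(1)\cap\mathsf{forbidden}(3)$, since $\{x_1,x_2,x_3\}=B\in\B$ and $\{x_3,x_4,x_5\}=B'\in\B$. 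In particular $\mathsf{forbidden}(1)\cap\mathsf{forbidden}(3)\neq\emptyset$.

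Finally I would combine these facts. A standard counting identity (writing each side as a sum over sequencings of the multiplicity with which it is covered) gives $\sum_{i}|\mathsf{forbidden}(i)| - \bigl|\bigcup_{i}\mathsf{forbidden}(i)\bigr| \ge |\mathsf{forbidden}(1)\cap\mathsf{forbidden}(3)| \ge 1$, so $|\mathsf{forbidden}| = \bigl|\bigcup_{i=1}^{v-2}\mathsf{forbidden}(i)\bigr| < v!$. Therefore some sequencing of $X$ is not forbidden, i.e.\ is $3$-good, which is the claim. (The same count yields $|\mathsf{forbidden}(i)| \le v(v-1)(v-3)!$ for a PSTS$(v)$, with equality only when the PSTS is an STS, so a minor variant should also handle the PSTS version.) The only delicate points are the exact evaluation $\sum_i|\mathsf{forbidden}(i)| = v!$ — which is precisely what rules out a one-line union bound — and verifying that the hypothesis $v>3$ is exactly what supplies the overlap that makes the bound strict.
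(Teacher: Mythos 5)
Your proof is correct and follows essentially the same route as the paper: compute $|\mathsf{forbidden}(i)| = v!/(v-2) = v(v-1)(v-3)!$, observe that the union bound is then exactly tight, and break the tie by exhibiting a sequencing lying in $\mathsf{forbidden}(1)\cap\mathsf{forbidden}(3)$ built from two blocks sharing a point. The only cosmetic differences are that you evaluate $|\mathsf{forbidden}(i)|$ by counting blocks times $3!\,(v-3)!$ arrangements where the paper uses the $\third$ function, and that you spell out the covering-multiplicity identity justifying the strict inequality, which the paper leaves implicit.
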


\begin{proof} 
Clearly, \[\mathsf{forbidden} = \bigcup _{i=1}^{v-2} \mathsf{forbidden}(i). \]
For any given value of $i$, it holds that 
$|\mathsf{forbidden}(i)| = v! / (v-2)$. This follows because, for any two  points,
 $x_i$ and $x_{i+1}$, the 3-subset $\{x_i, x_{i+1}, x_{i+2}\} \in \B$
if and only if $x_{i+2} = \third(x_i, x_{i+1})$.
So given any $x_i$ and $x_{i+1}$, the probability that $\{x_i, x_{i+1}, x_{i+2}\} \in \B$
is $1/(v-2)$.

Next, by the union bound, 
\begin{equation}
\label{count.eq}
 |\mathsf{forbidden}| \leq \sum_{i=1}^{v-2} |\mathsf{forbidden}(i)| = (v-2) \times \frac{v!}{ (v-2)} = v!
 \end{equation} 
Equality in (\ref{count.eq}) would be obtained if and only if the sets $\mathsf{forbidden}(i)$, $1 \leq i \leq v-2$, are pairwise disjoint. 

We show that equality in (\ref{count.eq}) is impossible: Consider any two intersecting blocks
$\{a,b,c\}, \{c,d,e\} \in \B$ (here is where we use the assumption that $v > 3$). 
Then any sequencing in which the first five symbols are $a\; b\; c\; d\; e$ (in that order) is in
$\mathsf{forbidden}(1) \cap \mathsf{forbidden}(3)$. Therefore,
$|\mathsf{forbidden}| < v!$ and thus there exists a $3$-good sequencing.
\end{proof}

Theorem \ref{T1} also holds for  \emph{partial} STS$(v)$ when $v > 3$.
\begin{theorem}
\label{T2}
Suppose $v > 3$ and $(X, \B)$ is a partial STS$(v)$ on points $X = \{1, \dots , v\}$.
Then there is a sequencing $\pi = [x_1\; x_2 \;  \cdots \;  x_v]$ of $X$ that is $3$-good
for $(X, \B)$.
\end{theorem}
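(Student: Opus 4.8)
The plan is to mirror the counting argument used for Theorem~\ref{T1}, exploiting the fact that a partial STS has ``slack'' in the union bound precisely because it has fewer than $v(v-1)/6$ blocks --- unless it happens to be a complete STS, in which case Theorem~\ref{T1} already applies.

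First I would dispose of the complete case. If every pair of points of $X$ lies in a block of $\B$, then $(X,\B)$ is in fact an STS$(v)$, and Theorem~\ref{T1} immediately supplies a $3$-good sequencing. So assume from now on that some pair of points is covered by no block. Writing $b = |\B|$ and using the defining property that distinct blocks of a partial STS cover distinct pairs, this assumption gives $3b < \binom{v}{2}$, that is, $b < v(v-1)/6$.

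Next, define $\mathsf{forbidden}(i)$ and $\mathsf{forbidden}$ exactly as in the paragraph preceding Theorem~\ref{T1}. For each $i$ with $1 \leq i \leq v-2$, a sequencing lies in $\mathsf{forbidden}(i)$ if and only if the set $\{x_i,x_{i+1},x_{i+2}\}$ equals one of the $b$ blocks; counting the choice of that block, the $3!$ orderings of its three points in positions $i,i+1,i+2$, and the $(v-3)!$ arrangements of the remaining points gives $|\mathsf{forbidden}(i)| = 6b\,(v-3)!$. The union bound then yields
\[
|\mathsf{forbidden}| \;\leq\; \sum_{i=1}^{v-2} |\mathsf{forbidden}(i)| \;=\; (v-2)\cdot 6b\,(v-3)! \;=\; 6b\,(v-2)! \;<\; v(v-1)(v-2)! \;=\; v!,
\]
where the strict inequality uses $6b < v(v-1)$. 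Hence some sequencing is not forbidden, i.e.\ is $3$-good, which proves the theorem.

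I do not expect a genuine obstacle here; the argument is essentially a bookkeeping variant of the proof of Theorem~\ref{T1}. The one point that needs care is that, for a partial STS, strictness of the union bound can no longer be obtained from the ``two intersecting blocks'' construction used in Theorem~\ref{T1} --- two intersecting blocks need not exist (the system could even have no blocks at all). Instead, strictness must be extracted from the deficiency $b < v(v-1)/6$, which is exactly the condition that $(X,\B)$ fails to be a complete STS; splitting into the complete and non-complete cases at the very start is what makes this clean.
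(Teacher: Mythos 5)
Your proof is correct and follows essentially the same route as the paper: reduce to Theorem~\ref{T1} when the system is complete, and otherwise get strict inequality in the union bound from the existence of an uncovered pair. The only cosmetic difference is that you compute $|\mathsf{forbidden}(i)| = 6b\,(v-3)!$ exactly and invoke $b < v(v-1)/6$, whereas the paper bounds $|\mathsf{forbidden}(i)| < v!/(v-2)$ directly by conditioning on an uncovered pair sitting in positions $i, i+1$; the two bookkeepings are equivalent.
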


\begin{proof} 
If $(X, \B)$ is an STS$(v)$, then we are done by Theorem \ref{T1}. Therefore, we can assume there is 
at least one pair $\{a,b\}$ that does not appear in any block in $\B$.
Suppose $x_i =a$ and $x_{i+1} = b$. Then, for every possible $x_{i+2}$, we have 
$\{x_i, x_{i+1}, x_{i+2}\} \not\in \B$. It then follows that $|\mathsf{forbidden}(i)| < v! / (v-2)$ for all $i$.

Now, when we apply the union bound, we have
\[ |\mathsf{forbidden}| \leq \sum_{i=1}^{v-2} |\mathsf{forbidden}(i)| < (v-2) \times v! / (v-2) = v!\] 
and we are done.
\end{proof}

\subsection{A greedy algorithm}

Theorems \ref{T1} and \ref{T2} can also be proven using a greedy algorithm.
First, we consider the case where $(X,\B)$ is an STS$(v)$.
Suppose we begin by choosing any two distinct values for $x_1$ and $x_2$.
Now, consider any  $i$ such that $3 \leq i \leq v-1$. 
Clearly we must have $x_i \not\in \{x_1, \dots , x_{i-1}\}$. Also, 
$x_i \neq \third(x_{i-2},x_{i-1})$. So there are at most
$i$ values for $x_i$ that are ruled out. Since $i \leq v-1$, there is at least
one value for $x_i$ that does not violate the required conditions. 

After choosing $x_1, x_2, \dots ,x_{v-1}$ as described above, there is only one
unused value remaining for $x_v$. But this might not result in a $3$-good sequencing, 
if it happens that $\{x_{v-2}, x_{v-1},x_v\} \in \B$.
However, in this case, it turns out that we can find a slight modification of of the sequencing 
$[x_1\; x_2 \;  \cdots \;  x_v]$ that is $3$-good,
provided that $v > 5$. 

Suppose we made sure to select $x_5$ such that $\{x_2,x_3,x_5\} \in \B$, i.e., we define  
$x_5 = \third (x_2,x_3)$.
This is an allowable choice for $x_5$ because 
\begin{itemize}
\item $\{x_1,x_2,x_3\} \not\in \B$ and
$\{x_2,x_3,x_4\} \not\in \B$, which implies that \[x_5 \not\in \{ x_1, x_2,x_3,x_4\},\] and
\item $\{x_3,x_4,x_5\} \not\in \B$, because $\{x_2,x_3,x_5\} \in \B$ and $x_2 \neq x_4$.
\end{itemize}
Now, suppose we have a sequencing $[x_1\; x_2 \; \cdots \; x_v]$, where $\{x_2,x_3,x_5\} \in \B$, 
which fails to be $3$-good only because $\{x_{v-2}, x_{v-1},x_v\} \in \B$ (which is not allowed). Consider the
modified sequencing $[y_1\; y_2 \; \cdots \; y_v]$ obtained from $[x_1\; x_2 \; \cdots \; x_v]$ by
switching $x_1$ and $x_v$. In order to show that $[y_1\; y_2 \; \cdots \; y_v]$ is
a $3$-good sequencing, we need to show that
\begin{enumerate}
\item $\{y_{v-2},y_{v-1},y_v\} = \{x_{v-2},x_{v-1},x_1\} \not\in \B$, and
\item $\{y_1,y_2,y_3\} = \{x_v,x_2,x_3\} \not\in \B$.
\end{enumerate}
To prove 1, we observe that $\{x_{v-2},x_{v-1},x_1\} \not\in \B$ because 
$\{x_{v-2},x_{v-1},x_v\} \in \B$ and $x_v \neq x_1$. To prove 2,
we observe that $\{x_2,x_3,x_5\} \in \B$ and $x_v \neq x_5$ because $v > 5$.
Thus the sequencing $[y_1\; y_2\; \cdots \;y_v]$ is $3$-good.

The above-described process can also be carried out to find a $3$-good sequencing for any partial 
STS$(v)$ with $v > 5$. The resulting algorithm is presented in Figure \ref{alg1}.

\begin{figure}[htb]
\begin{enumerate}
\item Choose a block $\{b,c,e\} \in \B$, let $a \neq b,c,e$
and let $d \neq a,b,c,e$.
\item Define $x_1 = a$, $x_2 = b$, $x_3 = c$, $x_4 = d$ and $x_5 = e$.
\item {\bf For} $i = 6$ {\bf to} $v-1$ {\bf do} define $x_i$ to be any 
element of $X$ that is distinct from the values $x_1, \dots , x_{i-1}$ and 
$\third(x_{i-2}, x_{i-1})$. 
\item Define $x_v$ to be the unique value that is distinct from $x_1, \dots , x_{v-1}$.
\item {\bf If} $\{ x_{v-2}, x_{v-1}, x_v\} \in \B$ {\bf then} interchange $x_1$ and $x_v$.
\item {\bf Return} $(\pi = [x_1\; x_2 \; \cdots \; x_v])$.
\end{enumerate}
\caption{Algorithm to find a $3$-good sequencing for a partial STS$(v)$, $(X,\B)$}
\label{alg1}
\end{figure}

From the discussion above, we have the following theorem.

\begin{theorem}
Suppose that $(X,\B)$ is a partial STS$(v)$ with $v >5$. Then the Algorithm 
presented in Figure \ref{alg1} will find a sequencing $\pi$ that is $3$-good for
$(X,\B)$.
\end{theorem}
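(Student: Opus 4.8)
The plan is to verify that each step of the algorithm in Figure~\ref{alg1} is well-defined and that the output sequencing $\pi$ is genuinely $3$-good; most of the work has already been laid out in the discussion preceding the theorem, so the proof is mainly an organized restatement. First I would check Step~1: since $v > 5$, after choosing a block $\{b,c,e\}$ we have three points used and at least three remain, so points $a$ and $d$ with the stated distinctness properties exist. Step~2 then fixes $x_1,\dots,x_5$, and because $\{x_2,x_3,x_5\} = \{b,c,e\} \in \B$ we are in exactly the situation analyzed above; I would note explicitly that $\{x_1,x_2,x_3\}\notin\B$ and $\{x_2,x_3,x_4\}\notin\B$ and $\{x_3,x_4,x_5\}\notin\B$ (the last because $\{x_2,x_3,x_5\}\in\B$ forces $x_5\neq x_4$, using that any pair lies in at most one block). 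So the initial five-point segment is consistent with $3$-goodness.

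Next I would handle Step~3: for each $i$ from $6$ to $v-1$, the forbidden values for $x_i$ are the $i-1$ previously used points together with $\third(x_{i-2},x_{i-1})$ (if that function is defined at this pair — in a partial STS it may not be, which only helps). That is at most $i$ forbidden values out of $v$, and since $i \leq v-1 < v$, a valid choice always exists; this guarantees $\{x_{i-2},x_{i-1},x_i\}\notin\B$ for all these $i$. Step~4 is forced and introduces no new triple constraint beyond possibly $\{x_{v-2},x_{v-1},x_v\}$. At this point $[x_1\;\cdots\;x_v]$ is a permutation of $X$, and the only window of three consecutive points that could form a block is $\{x_{v-2},x_{v-1},x_v\}$.

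Then I would dispatch Step~5. If $\{x_{v-2},x_{v-1},x_v\}\notin\B$, the sequencing is already $3$-good and we return it unchanged. Otherwise we interchange $x_1$ and $x_v$ to get $[y_1\;\cdots\;y_v]$; I would reproduce the two checks from the discussion. For the window at the end, $\{y_{v-2},y_{v-1},y_v\} = \{x_{v-2},x_{v-1},x_1\}\notin\B$ because $\{x_{v-2},x_{v-1},x_v\}\in\B$ and $x_1\neq x_v$ (distinct pair, at most one block). For the window at the start, $\{y_1,y_2,y_3\} = \{x_v,x_2,x_3\}\notin\B$ because $\{x_2,x_3,x_5\}\in\B$ and $x_v\neq x_5$ — here $x_5 = e$ was placed in position $5$ and $x_v$ occupies position $v > 5$, so they are different points. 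All interior windows $\{y_i,y_{i+1},y_{i+2}\}$ for $2 \leq i \leq v-3$ are unchanged from the original sequencing (none of them involves position $1$ or position $v$ except the two endpoints already handled), so they remain non-blocks. Hence $[y_1\;\cdots\;y_v]$ is $3$-good.

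The only real subtlety — and the step I would flag as the main thing to get right — is the interaction between the swap in Step~5 and the special placement of $x_5$ in Step~2: the argument that $\{y_1,y_2,y_3\}\notin\B$ relies crucially on $x_v \neq x_5$, which is where the hypothesis $v > 5$ is actually used (for $v = 5$ the swap would move the block $\{x_2,x_3,x_5\}$ to the front). Everything else is routine counting and the ``at most one block per pair'' property of a partial STS. I would therefore structure the written proof so that the roles of $v>5$ are made explicit at both the point where $a,d$ are chosen and the point where $x_v\neq x_5$ is invoked.
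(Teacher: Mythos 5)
Your proof is correct and follows essentially the same argument as the paper: the greedy step succeeds because at most $i\leq v-1$ values are forbidden for $x_i$, and the pre-placement of the block $\{b,c,e\}$ in positions $2,3,5$ guarantees that swapping $x_1$ with $x_v$ repairs the only possible bad window. One small wording slip: the reason $\{x_3,x_4,x_5\}=\{c,d,e\}\notin\B$ is that the pair $\{c,e\}$ already lies in the block $\{b,c,e\}$ and $d\neq b$ (not that ``$x_5\neq x_4$''), but you clearly have the right idea since you invoke the at-most-one-block-per-pair property.
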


\section{4-good sequencings}

It is tempting to conjecture that, for any $\ell$, all
``sufficiently large'' STS have $\ell$-good sequencings. 
In this section, we prove this conjecture for the case $\ell = 4$.

We might attempt to construct a $4$-good sequencing by a greedy approach
similar to that used in the Algorithm presented in Figure \ref{alg1}.
In general, when we choose a value for $x_i$, it must be distinct from 
$x_1, \dots , x_{i-1}$, of course. It is also required that
\[ x_i \neq \third(x_{i-3},x_{i-2}), \third(x_{i-3},x_{i-1}) \text{ or } \third(x_{i-2},x_{i-1}).\]

There will be a permissible choice for $x_i$ provided that
$i-1 + 3 \leq v-1$, which is equivalent to the condition $i \leq v-3$.
Thus we can define $x_1, x_2, \dots , x_{v-3}$ in such a way that they satisfy the relevant 
conditions, and our task would be to somehow fill in the last three positions of
the sequencing, after appropriate modifications, to satisfy the desired properties.
We describe how to do this now, for sufficiently large values of $v$.


Now, suppose that $[x_1\; x_2\; \cdots \; x_{v-3}]$ is a $4$-good partial sequencing of
$X = \{1, \dots ,v\}$. Let 
$\{\alpha_1,\alpha_2,\alpha_3\} = X \setminus \{ x_1, x_2, \dots , x_{v-3}\}$.
Also, let 
\[
\begin{array}{l} \beta_1 = \third(x_{v-5},x_{v-4}),\\
 \beta_2 = \third(x_{v-5},x_{v-3}), \text{ and }\\
\beta_3 = \third(x_{v-4},x_{v-3}).
\end{array}\]
Clearly $\beta_1,\beta_2$ and $\beta_3$ are distinct.
Observe that $x_{v-2}$ and $x_{v-1}$ must be chosen so that $x_{v-2} \neq \beta_1,\beta_2, \beta_3$
and $x_{v-1} \neq \beta_3$.

By  permuting $\alpha_1,\alpha_2,\alpha_3$ if necessary,
we can assume the following two conditions hold:
\begin{equation}
\label{alpha2.eq}
\alpha_2 \neq \beta_3
\end{equation}
and
\begin{equation}
\label{xv-3.eq}
x_{v-3} \neq \third(\alpha_2,\alpha_3).
\end{equation}

Now, define the following:
\[ 
\begin{array}{l} \gamma = \third(\alpha_2,x_{v-3}),\\
\delta = \third(\alpha_2,x_{v-4}),\\
\epsilon = \third(\alpha_3,x_{v-3}),
 \text{and} \\ \eta = \third(\alpha_2, \alpha_3).
 \end{array}\]
%
Next, suppose we define $x_{v-2} = \chi$, $x_{v-1} = \alpha_2$ and  $x_v = \alpha_3$,
where 
\begin{equation}
\label{t.eq}
\chi \not\in \{ x_{v-5},x_{v-4},x_{v-3},\beta_1,\beta_2,\beta_3,\gamma,\delta,\epsilon, \eta\}
\end{equation}
is to be determined.
Thus,  the last six elements of the sequencing will be
\[ x_{v-5} \; x_{v-4} \; x_{v-3}\; \chi \; \alpha_2 \; \alpha_3.\]

There should be no block in $\B$ contained in any four consecutive points chosen
from these six points. We enumerate all the triples and verify that none of them
are blocks:
\begin{center}
\begin{tabular}{c|c}
triple & explanation \\ \hline
$\{x_{v-5} , x_{v-4} , x_{v-3}\}$ & greedy algorithm ensures it is not a block\\
$\{x_{v-5} , x_{v-4} , \chi\}$ & $\{x_{v-5} , x_{v-4} , \beta_1\}$ is a block and $\chi \neq \beta_1$\\
$\{x_{v-5} , x_{v-3} , \chi\}$ & $\{x_{v-5} , x_{v-3} , \beta_2\}$ is a block and $\chi \neq \beta_2$\\
$\{x_{v-4} , x_{v-3} , \chi\}$ & $\{x_{v-4} , x_{v-3} , \beta_3\}$ is a block and $\chi \neq \beta_3$\\
$\{x_{v-4} , x_{v-3} , \alpha_2\}$ & $\{x_{v-4} , x_{v-3} , \beta_3\}$ is a block and $\alpha_2 \neq \beta_3$ by (\ref{alpha2.eq})\\
$\{x_{v-4} , \chi , \alpha_2\}$ & $\{x_{v-4} , \delta , \alpha_2\}$ is a block and $\chi \neq \delta$\\
$\{x_{v-3} , \chi , \alpha_2\}$ & $\{x_{v-3} , \gamma , \alpha_2\}$ is a block and $\chi \neq \gamma$\\
$\{x_{v-3} , \chi , \alpha_3\}$ & $\{x_{v-3} , \epsilon , \alpha_3\}$ is a block and $\chi \neq \epsilon$\\
$\{x_{v-3} , \alpha_2 , \alpha_3\}$ & this is not a block by (\ref{xv-3.eq})\\
$\{\chi , \alpha_2 , \alpha_3\}$ & $\{\eta , \alpha_2 , \alpha_3\}$ is a block and $\chi \neq \eta$.
\end{tabular}
\end{center}
Suppose $v \geq 14$.
Our strategy is to define $\chi$ to be one of $x_1, x_2$,  $\dots$, $x_8$, in such a way that
(\ref{t.eq}) is satisfied. Note that $v - 5 \geq 9$ so we are guaranteed that 
$\chi \neq x_{v-5} ,x_{v-4} , x_{v-3}.$
We can choose $\chi\in \{x_1, x_2 , \dots ,  x_8\}$ because at least one of these eight values
is not in the set $\{\beta_1,\beta_2,\beta_3,\gamma,\delta, \epsilon,\eta\}$, which has size 7. 
Suppose we take $\chi = x_{\kappa}$, where $\kappa \in \{1,2, \dots , 8\}$.
Then we redefine $x_{\kappa} = \alpha_1$. Another way to describe this process is to temporarily 
define $x_{v-2} = \alpha_1$ and then interchange $x_{v-2}$ with $x_{\kappa}$.

Now, when we initially choose $x_1, x_2, x_3, \dots$, we have no idea which value $\alpha_1$ we will 
be interchanging with $x_{\kappa}$. So it is necessary to ensure that any value we ``swap
in'' will not result in a block being contained in four successive points of the sequencing.
Clearly we only have to worry about the first $8+3=11$ points, $x_1, x_2, x_3, \dots, x_{11}$.

Define \[Y = \big\{ \third(x_i,x_j): 1 \leq i < j \leq 11, |i-j| \leq 3 \big\} 
\setminus \{x_1, \dots , x_{11}\}.\]
(Note, in the definition of $Y$, that we do not care about pairs of points that
are more than three positions apart.)
Denote the points in $Y$ as $y_1, \dots , y_m$. It is not hard to verify that
$m \leq 27$, because there are ten pairs $x_i,x_j$ in $\{x_1,\dots , x_{11}\}$ with $j-i = 1$,
nine pairs with $j-i = 2$ and eight pairs with $j-i=3$.

Having already chosen
$x_1, \dots, x_{11}$, we want to ``pre-specify'' some of the next points (this will require
a small modification to the greedy algorithm). To be specific, we define
$x_{14} = y_1$, $x_{16} = y_2$, $\dots$, $x_{2m+12} = y_m$. 
Note that no three of the $y_i$'s are contained
in four consecutive points of the sequencing, from $x_{12}$ to $x_{2m+12}$.

The following diagram might be helpful in the subsequent discussion:
\begin{center}
\fb{\RED{$x_1$}}\fb{\RED{$x_2$}}\fb{\RED{$x_3$}}\fb{\RED{$x_4$}}\fb{\RED{$x_5$}}\fb{\RED{$x_6$}}\fb{\RED{$x_7$}}\fb{\RED{$x_8$}}%
\fb{\RED{$x_9$}}\fb{\RED{$x_{10}$}}\fb{\RED{$x_{11}$}}%
\fb{$x_{12}$}\fb{$x_{13}$}\vspace{.25in}\\
\fb{\RED{$y_1$}}\fb{$x_{15}$}\fb{\RED{$y_2$}}\fb{$x_{17}$}
\,
\raisebox{1ex}{$\cdots$} \,  \fbb{$x_{2m+7}$}\fbb{\RED{$y_{m-2}$}}\fbb{$x_{2m+9}$}\fbb{\RED{$y_{m-1}$}}\fbb{$x_{2m+11}$}\fbb{\RED{$y_m$}}
\end{center}
In this diagram, the red values have been defined and we need to determine the black values.
Let's consider how the greedy algorithm must be modified in order to accomplish this.
We have the following additional restrictions ``looking ahead'' when choosing values
for $x_{12}, x_{13}, x_{15}, \dots ,x_{2m+11}$:
\begin{itemize}
\item each of $x_{12}, x_{13}, x_{15}, \dots ,x_{2m+11}$ must be distinct from
$y_1, \dots , y_m$;
\item we require  that  $\{x_{11},x_{12},y_1\} \not\in \B$, so we must define 
\[x_{12} \neq \third(x_{11},y_1);\]
\item we require  that  \[\{x_{11},x_{13},y_1\}, \{x_{12},x_{13},y_1\}, \{x_{13},y_1,y_2\} \not\in \B,\]
so  we must define \[x_{13} \neq \third(x_{11},y_1), \third(x_{12},y_1), \third(y_1,y_2);\]
\item we require  that  \[\{x_{13},x_{15},y_2\}, \{y_1,x_{15},y_2\} , \{x_{15},y_2,y_3\}\not\in \B,\]
so  we must define \[x_{15} \neq \third(x_{13},y_2), \third(y_1,y_2),  \third(y_2,y_3);\]
\item $\dots$
\item we require  that  
\[\{x_{2m+7},x_{2m+9},y_{m-1}\}, \{y_{m-2},x_{2m+9},y_{m-1}\}, 
\{x_{2m+9},y_{m-1},y_m\} \not\in \B,\]
so  we must define \[x_{2m+9} \neq \third(x_{2m+7},y_{m-1}), \third(y_{m-2},y_{m-1}),  \third(y_{m-1},y_m); \]
\item we require  that   \[\{x_{2m+9},x_{2m+11},y_m\}, \{y_{m-1},x_{2m+11},y_m\} \not\in \B,\]
so  we must define \[x_{2m+11} \neq \third(x_{2m+9},y_{m}), \third(y_{m-1},y_m).\]
\end{itemize}
Of course, we need to ensure that a greedy algorithm can choose values for all these $x_i$'s. 

Now consider what happens when we swap $x_{\kappa}$ with $\alpha_1$. The value $\alpha_1 \not\in Y$, 
so $\alpha_1$ cannot form a block with any two of the points $x_1, \dots, x_{11}$. Since $\kappa \leq 8$,
there are no blocks contained in any four consecutive points chosen from the first
11 points of the sequencing. At the opposite end, we have guaranteed that there are
no blocks contained in any four consecutive points chosen from the last
six points of the sequencing, because of the way that $x_{\kappa}$ was chosen.

The resulting algorithm has the high-level structure described in Figure \ref{alg2}.
\begin{figure}[htb]
\begin{enumerate}
\item Determine $x_1, \dots , x_{11}$ using the greedy approach.
\item Fill in the values $y_1, \dots , y_m$ and the determine the remaining
values $x_{12}, \dots , x_{2m+11}$ using the ``modified'' greedy approach.
\item Determine $x_{2m+13}, \dots , x_{v-3}$ using the greedy approach.
\item Define the values $x_{v-2}= \alpha_1, x_{v-1}= \alpha_2, x_{v}= \alpha_3$ as described in the text,
and then swap $x_{v-2}$ with $x_{\kappa}$.
\item {\bf Return} $(\pi = [x_1\; x_2 \;  \cdots \; x_v])$.
\end{enumerate}
\caption{Algorithm to find a $4$-good sequencing for an STS$(v)$, $(X,\B)$}
\label{alg2}
\end{figure}

All the above steps can be carried out if we ensure that the first $2m+12$ elements of the sequencing
do not overlap with the last six elements of the sequencing. Since $m \leq 27$, this  condition is
guaranteed to hold if $v - 5 \geq 2 \times 27 + 12 + 1$, or $v \geq 72$. So we have proven the following.

\begin{theorem}
\label{T3}
Suppose $v > 71$ and $(X, \B)$ is an STS$(v)$ on points $X = \{1, \dots , v\}$.
Then there is a sequencing $\pi = [x_1\; x_2 \;  \cdots \;  x_v]$ of $X$ that is 4-good
for $(X, \B)$.
\end{theorem}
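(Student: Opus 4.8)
The plan is to build the sequencing left-to-right with a greedy procedure, but with a small amount of ``pre-planning'' baked in so that the six positions at the right-hand end and the eleven positions at the left-hand end can be repaired simultaneously. The key observation, already isolated in the text preceding the statement, is that a $4$-good \emph{partial} sequencing $[x_1\;\cdots\;x_{v-3}]$ can be extended to a full $4$-good sequencing by setting $x_{v-2}=\chi$, $x_{v-1}=\alpha_2$, $x_v=\alpha_3$ for a suitable permutation of the three leftover points $\alpha_1,\alpha_2,\alpha_3$, together with the choice of a value $\chi$ avoiding the $10$-element (really $7$-element, after discarding $x_{v-5},x_{v-4},x_{v-3}$) set in~(\ref{t.eq}); and that $\chi$ may be taken from $\{x_1,\dots,x_8\}$ since $8>7$, after which $x_\kappa$ is over-written by $\alpha_1$. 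So the real content is: (i) a greedy rule that produces a $4$-good partial sequencing up to position $v-3$, and (ii) enough foresight that swapping $\alpha_1$ into position $\kappa\le 8$ never creates a forbidden quadruple near the left end.

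First I would verify the plain greedy step: when choosing $x_i$ for $4\le i\le v-3$, the values ruled out are $x_1,\dots,x_{i-1}$ together with $\third(x_{i-3},x_{i-2})$, $\third(x_{i-3},x_{i-1})$, $\third(x_{i-2},x_{i-1})$, a total of at most $i-1+3=i+2$ values; since $i\le v-3$ this is at most $v-1<v$, so a legal choice always exists. Next I would handle the left end: after choosing $x_1,\dots,x_{11}$ greedily, form the set $Y$ of all ``thirds'' $\third(x_i,x_j)$ with $1\le i<j\le 11$ and $j-i\le 3$ that do not already lie in $\{x_1,\dots,x_{11}\}$; counting gives $|Y|=m\le 10+9+8=27$. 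Writing $Y=\{y_1,\dots,y_m\}$, I would force these values into the even-indexed slots $x_{14},x_{16},\dots,x_{2m+12}$, so that no three $y_i$'s sit inside any window of four consecutive positions; then the odd-indexed slots $x_{12},x_{13},x_{15},\dots,x_{2m+11}$ and all later slots $x_{2m+13},\dots,x_{v-3}$ are filled by a greedy rule that, in addition to the usual three ``third'' constraints and the ``used-value'' constraint, also avoids the $y_i$'s and the at most three ``look-ahead thirds'' $\third(\cdot,y_j)$ and $\third(y_j,y_{j+1})$ shown in the bulleted list. The number of forbidden values at any such step is still bounded by something strictly less than $v$ once $v\ge 72$, because the number of already-placed points plus the bounded list of extra constraints stays below $v$; this is the routine verification I would spell out.

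With steps~1--3 of Figure~\ref{alg2} done, the partial sequencing $[x_1\;\cdots\;x_{v-3}]$ is $4$-good, and the argument recalled above (the explicit table checking all ten triples among $x_{v-5},x_{v-4},x_{v-3},\chi,\alpha_2,\alpha_3$, which uses only~(\ref{alpha2.eq}), (\ref{xv-3.eq}) and~(\ref{t.eq})) shows the right end is $4$-good after step~4. It remains to check that the $\alpha_1$-for-$x_\kappa$ swap, with $\kappa\le 8$, does not spoil the left end: since $\alpha_1\notin\{x_1,\dots,x_{v-3}\}$ it is unused, and $\alpha_1\notin Y$ because $Y\subseteq\{x_1,\dots,x_{v-3}\}\cup(\text{thirds of placed pairs})$ while $\alpha_1$ is exactly one of the three points never placed — hence $\alpha_1$ cannot complete a block with any two of $x_1,\dots,x_{11}$, and no window of four consecutive positions among the first $11$ becomes forbidden. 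The last thing to confirm is that all of this fits, i.e.\ that the pre-specified prefix of length $2m+12$ is disjoint from the six-element suffix: this needs $v-3\ge (2m+12)+6$, and since $m\le 27$ it suffices that $v-5\ge 2\cdot 27+12+1$, i.e.\ $v\ge 72$, which is exactly the hypothesis $v>71$.

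The main obstacle, and the place I would be most careful, is the bookkeeping in step~2: one must be certain that at \emph{every} greedy choice (including the odd-indexed slots wedged between pre-filled $y_i$'s, and including position $x_{12}$ and $x_{13}$ which sit closest to the crowded prefix) the total count of excluded values — used points, three ordinary ``third'' constraints, the $m\le 27$ forced values $y_i$, and up to three additional look-ahead ``third'' constraints — is strictly smaller than $v$. This is true with room to spare for $v\ge 72$, but it is the one spot where an off-by-one in the index arithmetic (how many $y_i$'s can still lie ahead of a given position, how many look-ahead constraints are active) could invalidate the argument, so I would tabulate the worst case explicitly rather than wave at it.
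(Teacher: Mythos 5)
Your proposal follows the paper's own argument essentially step for step: the plain greedy rule up to position $v-3$, the set $Y$ of at most $27$ ``thirds'' forced into the even-indexed slots $x_{14},\dots,x_{2m+12}$ with the look-ahead constraints, the choice of $\chi\in\{x_1,\dots,x_8\}$ avoiding the $7$ forbidden values, the swap of $\alpha_1$ into position $\kappa$, and the non-overlap condition yielding $v\ge 72$. It is correct and takes the same approach as the paper; your closing remark about carefully tabulating the exclusion counts in step~2 is a fair observation, as that is also the step the paper treats most briefly.
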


A similar result can also be proven for PSTS$(v)$ using this technique.

\section{Conclusion}

We make the following conjecture:
For any integer $\ell \geq 3$, there is an integer $n(\ell)$ such that any 
STS$(v)$  with $v \geq n(\ell)$ has an $\ell$-good sequencing.


\begin{thebibliography}{X}

\bibitem{AKP}
Brian Alspach, Donald L.\ Kreher and Adri\'{a}n Pastine.
Sequenceable triple systems. Preprint.

\bibitem{CR}
Charles J.\ Colbourn and Alexander Rosa. \emph{Triple Systems}, Oxford University Press, 1999.


\end{thebibliography}
\end{document}